\newtheorem{thm}{Theorem}[section]
\newtheorem{cor}[thm]{Corollary}
\newtheorem{lem}[thm]{Lemma}
\newtheorem{prop}[thm]{Proposition}
\theoremstyle{definition}
\newtheorem{defn}[thm]{Definition}
\theoremstyle{remark}
\newtheorem{rem}[thm]{Remark}
\newtheorem{ex}[thm]{Example}
\newcommand{\id }{{\rm id}}
\renewcommand{\wr }{{\,\rm wr\,}}
\newcommand{\e }{\varepsilon }
\renewcommand{\phi }{\varphi}
\newcommand{\Orb}{\operatorname{Orb}}
\newcommand{\Sub}{\operatorname{Sub}}
\newcommand{\NN}{\mathbb{N}}
\newcommand{\ZZ}{\mathbb{Z}}
\newcommand{\G}{\mathcal{G}}
\begin{document}

\title{Finitely presented condensed groups}

\author{D. Osin}
\date{}
\maketitle

\begin{abstract}
Let $\G$ denote the space of finitely generated marked groups. For any finitely generated group $G$, we construct a continuous, injective map  $f$ from the space of subgroups $\Sub(G)$ to $\G$ that sends conjugate subgroups to isomorphic marked groups; in addition, if $G$ is finitely presented and $H\le G$ is finitely generated, then $f(H)$ is finitely presented. This result allows us to transfer various topological phenomena occurring in $\Sub(G)$ to $\G$. In particular, we provide the first example of a finitely presented group whose isomorphism class in $\G$ has no isolated points.
\end{abstract}

\section{Introduction}

Informally, the space of finitely generated marked groups, denoted by $\G$, is the set of all pairs $(G,A)$, where $G$ is a group and $A$ is a finite ordered generating set of $G$, considered up to a natural equivalence relation. A topology induced by the local convergence of Cayley graphs turns $\mathcal G$ into a $\sigma$-compact Polish space. We refer the reader to the next section for the precise definition.

We say that a marked group $(G,A)\in \G$ has a certain group-theoretic property (e.g., is finitely presented) if $G$ has this property. Similarly, two marked groups $(G,A),(H,B)\in \G$ are \emph{isomorphic} if $G\cong H$. For a finitely generated group $G$, we denote by $[G]$ its \emph{isomorphism class} in $\G$; that is,
$$
[G]=\{ (H,B)\in \G\mid H\cong G \}.
$$

The following definition was proposed in  \cite{Osi21a}.

\begin{defn}
A finitely generated group $G$ is \emph{condensed} if the isomorphism class $[G]$ has no isolated points in $\G$.
\end{defn}

The study of condensed groups is motivated by the connections to the Borel complexity of the isomorphism relation and model theoretic properties of subspaces of $\G$. For instance, it is not difficult to show that the isomorphism relation on a closed isomorphism-invariant subset $\mathcal S$ of $\G$ is smooth if and only if $\mathcal S$ contains no condensed groups. Furthermore, condensed groups lead to non-trivial examples of subspaces of $\G$ satisfying a topological zero-one law for first order sentences. In particular, the closure of the isomorphism class of any condensed group in $\G$ contains a subset of cardinality $2^{\aleph_0}$ consisting of finitely generated, elementarily equivalent, non-isomorphic groups. Finding examples of this kind is a challenging task since the standard tools for constructing elementarily equivalent models, such as ultrapowers and the L\" owenheim-Skolem theorem, are not available in the context of finitely generated structures. For more details and applications, we refer the interested reader to \cite{Osi21a}.

Most naturally occurring groups, such as linear groups, hyperbolic groups, and finitely presented residually finite groups, cannot be condensed. On the other hand, the class of condensed groups includes finitely generated groups isomorphic to their direct square, generic torsion-free lacunary hyperbolic groups, as well as certain solvable groups and groups of intermediate growth (see \cite{Nek,Osi21a,Osi21b,Wil}). All these examples are infinitely presented.

In \cite{Osi21a}, the author asked whether there exists a finitely presented condensed group. An additional motivation for this question comes from the fact that every such a group $G$ is \emph{extremely non-Hopfian} in the following sense: for every finite subset $\mathcal F\subseteq G$, there exists a non-injective epimorphism $\phi\colon G\to G$ such that $\phi\vert_{\mathcal F}$ is injective.

The goal of this note is to provide a general recipe for constructing finitely presented condensed groups. In order to state our main results, we need to introduce auxiliary notation. Let $\Sub(G)$ denote the space of subgroups of a group $G$. We think of $\Sub(G)$ as a subset of $2^G$ and endow it with the topology induced by the product topology on $2^G$. The group $G$ acts on $\Sub(G)$ continuously by conjugation: an element $g\in G$ maps each subgroup $H\le G$ to $gHg^{-1}$. We first prove the following theorem of independent interest.

\begin{thm}\label{main}
For any finitely generated group $G$, there is an injective, continuous map $f\colon Sub(G)\to \G$ satisfying the following conditions.
\begin{enumerate}
\item[(a)]$f $ maps conjugate subgroups of $G$ to isomorphic marked groups.
\item[(b)] If $G$ is finitely presented and $H\le G$ is finitely generated, then $f(H)$ is finitely presented.
\end{enumerate}
\end{thm}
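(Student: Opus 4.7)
The plan is to encode $H$ as the set of $G$-elements commuting with a single stable letter inside an HNN extension. Fix an ordered finite generating set $A=(a_1,\ldots,a_k)$ of $G$. For each $H\le G$, I would set
\[
\Gamma_H := \langle\, G,\ t \,\mid\, [t,h]=1 \text{ for all } h\in H\,\rangle,
\]
the HNN extension of $G$ with both associated subgroups equal to $H$ and identifying isomorphism the identity on $H$, and define $f(H):=(\Gamma_H,(a_1,\ldots,a_k,t))\in \G$. Britton's lemma ensures that $G$ embeds in $\Gamma_H$, so one may identify $H\subseteq G\subseteq \Gamma_H$.

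Properties (a), (b) and the injectivity of $f$ then follow from standard HNN-extension theory. For (a), if $H'=gHg^{-1}$, then $x\mapsto x$ on $G$ together with $t\mapsto gtg^{-1}$ induces an isomorphism $\Gamma_H\to \Gamma_{H'}$. For (b), if $G$ admits a finite presentation with relator set $R_G$ and $H=\langle h_1,\ldots,h_m\rangle$, then $\Gamma_H=\langle A,t\mid R_G,\,[t,h_1],\ldots,[t,h_m]\rangle$ is finitely presented. For injectivity, Britton's lemma applied to $twt^{-1}w^{-1}$ (where $w$ is any word in $A^{\pm 1}$) yields $[t,w]=_{\Gamma_H}1$ if and only if $w\in H$, so the marked group $f(H)$ recovers $H$ as the set of $w\in G$ with $[t,w]=1$.

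Continuity will be the subtlest step. Assuming $H_n\to H$ in $\Sub(G)$ and fixing $R\ge 1$, convergence in $\Sub(G)$ gives $H_n\cap B_G(R)=H\cap B_G(R)$ for all sufficiently large $n$, where $B_G(R)$ is the $A$-ball in $G$. The crucial observation I would use is that a Britton reduction of any word of $(A\cup\{t\})$-length $\le R$ only poses membership queries of the form ``does $v\in H$?'' for $A$-subwords $v$ appearing at some stage of the reduction; since each pinch $t^{\pm 1}v t^{\mp 1}\mapsto v$ strictly decreases the total word length, every such $v$ has $A$-length $\le R$ and represents an element of $B_G(R)$. Therefore whether a word of length $\le R$ is trivial in $\Gamma_H$ depends only on $H\cap B_G(R)$, and the radius-$R$ balls in the Cayley graphs of $f(H_n)$ and $f(H)$ coincide for $n$ large, yielding $f(H_n)\to f(H)$.

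The main obstacle will be making this continuity argument fully rigorous: one must track how $H$-membership queries arise during Britton reduction and verify that they only concern $A$-bounded elements, independently of the order in which pinches are performed. The other verifications reduce essentially immediately to Britton's lemma and the HNN-extension presentation.
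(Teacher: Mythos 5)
Your proposal uses exactly the same construction and arguments as the paper: the HNN extension $E(G,H)=\langle G,t\mid [t,h]=1\ \forall h\in H\rangle$ marked by $(a_1,\ldots,a_n,t)$, injectivity and continuity via Britton's lemma (with the same key length bound that any pinch candidate has $A$-length at most $r$), the isomorphism $t\mapsto gtg^{-1}$ for (a), and finite presentability via the generators of $H$ for (b). The paper makes the continuity step rigorous by a minimal-counterexample argument: it takes a shortest word $w$ of $X$-length $\le r$ whose triviality differs between $E(G,H)$ and $E(G,K)$ for $K\in U(\mathcal F,H)$ with $\mathcal F$ the $A$-ball of radius $r$, extracts a pinch $g_j$ from Britton's lemma with $|g_j|_A\le r$ (hence $g_j\in H\cap K$), removes it to get a strictly shorter word with the same property, and derives a contradiction — which is precisely the rigorous version of the reduction you sketched.
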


Theorem \ref{main} allows us to transfer various topological phenomena occurring in $Sub(G)$ to the space $\G$. For instance, it implies that $f(H)$ is condensed whenever the set
$$
\Orb_G(H)=\{ gHg^{-1}\mid g\in G\}\subseteq \Sub (G)
$$
has no isolated points (see Corollary \ref{Cor:main}). Using Higman's embedding theorem, it is not difficult to show that a \emph{finitely generated group $H$ can be embedded in a finitely presented group $G$ so that $\Orb_G(H)$ has no isolated points if and only if $H$ is recursively presented and not co-Hopfian} (Proposition~\ref{prop}). This yields a variety of finitely presented condensed groups. In certain cases, e.g., for $H\cong \ZZ$, the use of Higman's embedding theorem can be avoided and we obtain particularly simple examples.

\begin{cor}\label{cor}
The group
$$
E=\left\langle a,b,c, h,s, t \; \left| \begin{array}{c}
[a,h]=1, \;\, [b,h]=1,\;\, [c,h]=1, \\
a^2=1, \;\, [a,a^b]=1, \;\,[b,c]=1,\;\, a^c=aa^b,\\
(h^2)^s= ha,\;\, (h^2)^t=h^2
\end{array}
\right. \right\rangle
$$
is condensed.
\end{cor}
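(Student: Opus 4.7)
The plan is to realize $E$ as $f(H)$ for an explicit finitely presented group $G$ containing a cyclic subgroup $H \cong \ZZ$ whose conjugation orbit $\Orb_G(H)$ has no isolated points in $\Sub(G)$, and then invoke Corollary~\ref{Cor:main}.

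First I would unpack the structure of $E$. The relations $a^2=1$, $[a,a^b]=1$, $[b,c]=1$, $a^c=aa^b$ present the subgroup $K := \langle a,b,c\rangle$ as the finitely presented metabelian group $\mathbb{F}_2[x^{\pm 1},(x+1)^{-1}] \rtimes \ZZ^2$, with $b$ and $c$ acting on the $\mathbb{F}_2$-module generated by $a$ as multiplication by $x$ and $1+x$, respectively. Since $h$ commutes with $a,b,c$, the subgroup $\langle K,h\rangle$ is $K \times \ZZ$. The remaining relations exhibit $E$ as a multiple HNN extension of $K \times \ZZ$ with stable letters $s,t$ and associated subgroups both equal to $\langle h^2\rangle$, identified via $s$ with $\langle ha\rangle \supsetneq \langle h^2\rangle$ (note $(ha)^2 = h^2$) and via $t$ with itself. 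By Britton's lemma, the base $K \times \ZZ$ embeds in $E$, so computations in the base make sense in $E$.

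Next I would identify the pair $(G,H)$ with $E = f(H)$. Since $H \cong \ZZ$ is recursively presented and not co-Hopfian (via $n \mapsto 2n$), Proposition~\ref{prop} provides such a pair, constructible without Higman's theorem; the presentation of $E$ is tailored so that applying the construction of Theorem~\ref{main} to this pair recovers $E$ up to isomorphism. Here $s$ and $t$ implement a Baumslag--Solitar-style halving of powers of $h^2$ (a brief computation gives $s h^2 s^{-1} = h^4$, so $s\langle h^2\rangle s^{-1} = \langle h^4\rangle \subsetneq \langle h^2\rangle$), witnessing non-co-Hopficity, while the metabelian subgroup $K$ supplies an infinite family of distinct involutions $a^{b^i c^j}$ that $f$ uses as distinguishing marks among subgroups of $G$.

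The main technical obstacle is to verify that $\Orb_G(H)$ has no isolated points. The idea is to combine the $s$-halving with the $K$-action to construct conjugating elements $g_n \in G$ so that the subgroups $g_n H g_n^{-1}$ are pairwise distinct yet agree with $H$ on arbitrarily large balls of $G$, giving the required convergence in $\Sub(G)$. The rigorous verification requires some care with the normal-form theory for multiple HNN extensions. Once it is in hand, Corollary~\ref{Cor:main} immediately yields that $E = f(H)$ is condensed.
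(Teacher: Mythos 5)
Your high-level plan matches the paper's: realize $E$ as $E(G,H)$ with $G=\langle a,b,c,h,s\rangle$, $H=\langle h^2\rangle\cong\ZZ$, show $\Orb_G(H)$ has no isolated points, and invoke Corollary~\ref{Cor:main}. Your structural analysis (metabelian $K=\langle a,b,c\rangle$, base $K\times\ZZ$, the $s$-HNN relation, the computation $sh^2s^{-1}=h^4$) is correct, though only $s$, not $t$, does any ``halving''; $t$ is just the commuting stable letter from the construction of $E(G,H)$.

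There is, however, a genuine gap at the step you yourself flag as ``the main technical obstacle'': showing $\Orb_G(H)$ has no isolated points is the entire content of the proof, and your sketch does not contain the argument --- and points in a slightly wrong direction, since no multiple-HNN normal-form machinery is needed beyond knowing that the base $K\times\ZZ$ embeds in $G$. The actual mechanism is this. The abelian subgroup $A=\langle h\rangle\times\langle a^{b^i}:i\in\ZZ\rangle\cong\ZZ\times\bigoplus_{\ZZ}\ZZ/2\ZZ$ of $G$ is \emph{not finitely generated}. Given any finite $\mathcal F\subseteq G$, one may therefore pick $i$ with $a^{b^i}\notin\langle h,\mathcal F\cap A\rangle$ and set $g=(sb^i)^{-1}$. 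A direct computation from the relations gives $gHg^{-1}=\langle(h^2)^{sb^i}\rangle=\langle(ha)^{b^i}\rangle=\langle ha^{b^i}\rangle$, and since $(ha^{b^i})^2=h^2$ this equals $H\cup Ha^{b^i}$, a proper overgroup of $H$; the choice of $i$ forces $Ha^{b^i}\cap\mathcal F=\emptyset$, so $gHg^{-1}\cap\mathcal F=H\cap\mathcal F$ while $gHg^{-1}\neq H$. Thus $H$ is a non-isolated point of its orbit, and since $G$ acts on $\Sub(G)$ by homeomorphisms, no point of $\Orb_G(H)$ is isolated. The ingredient your proposal misses is the non-finite-generation of $A$ --- it is precisely what guarantees, for every $\mathcal F$, a witness involution $a^{b^i}$ whose coset can be ``slipped past'' $\mathcal F$.
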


\paragraph{Acknowledgments.} This work has been supported by the NSF grant DMS-1853989.

\section{The space of finitely generated marked groups}

We begin by reviewing the necessary definitions. Let $2^G$ denote the power set of $G$ endowed with the product topology (or, equivalently, the topology of pointwise convergence of indicator functions). For a group $G$, let $\Sub(G)$ denote the set of all subgroups of $G$. We think of $\Sub(G)$ as a subset of $2^G$ and endow it with the induced topology. Thus, the base of neighborhoods of $H\in \Sub(G)$ consists of the sets
\begin{equation}\label{U}
U(\mathcal F,H)=\{ K\le G \mid K\cap \mathcal F = H\cap \mathcal F\},
\end{equation}
where $\mathcal F$ ranges in the set of all finite subsets of $G$. $Sub(G)$ can be thought of as a particular case of the space of closed subgroups of a locally compact group introduced by Chabauty in \cite{Cha}; for this reason, $\Sub(G)$ is sometimes called the\emph{ Chabauty space} of $G$.

\begin{defn}[Grigorchuk \cite{Gri}]
Let $n\in \NN$. An \emph{$n$-generated marked group} is the equivalence class of a pair $(G,A)$, where $G$ is a group and $(a_1, \ldots, a_n)\subseteq G^n$ is an $n$-tuple such that $G$ is generated by $\{a_1, \ldots, a_n\}$; two such pairs  $(G,(a_1, \ldots, a_n))$ and $(H,(b_1, \ldots, b_n))$ are \emph{equivalent} if the map sending $a_i$ to $b_i$ for all $i=1, \ldots, n$ extends to an isomorphism $G\to H$. We keep the notation $(G,A)$ for the equivalence class of $(G,A)$.
\end{defn}

\vspace{.5mm}

\begin{ex}
Let $G=\ZZ^n$. For any $n\in \NN$ and any two generating tuples $A, B\in G^n$, the pairs $(G,A)$ and $(G,B)$ are equivalent.
\end{ex}

\vspace{.5mm}

Let $\G_n$ denote the set of all $n$-generated marked groups. The topology on $\G_n$ can be defined as follows. Let $F_n$ denote the free group of rank $n\ge 1$ with a fixed basis $X=\{x_1, \ldots, x_n\}$. Given a group $G$, a tuple $(a_1, \ldots, a_n)\in G^n$, and an element $w\in F_n$, we denote by $w(a_1, \ldots, a_n)$ the image of $w$ in $G$ under the ``evaluation homomorphism" sending $x_i$ to $a_i$ for all $i=1, \ldots, n$.
Given two marked groups $(G,A), (H,B)\in \G_n$, where $A=(a_1, \ldots, a_n)$ and $B=(b_1, \ldots, b_n)$, we write $(G,A)\cong_r(H,B)$ for some $r\in \NN$ if, for every element $w\in F_n$ of length $|w|_X\le r$, we have $w(a_1, \ldots, a_n)=1$ in $G$ if and only if $w(b_1, \ldots, b_n)=1$ in $H$. It is easy to see that the relations $\cong_r$ on $\G_n$ are well-defined, i.e., the definition is independent of the choice of particular representatives of the equivalence classes $(G,A)$ and $(H,B)$. The base of neighborhoods of a point $(G,A)\in \G_n$ is given by the sets
\begin{equation}\label{W}
W(r, (G,A))=\{ (H,B)\in \G_n \mid (H,Y)\cong_r (G,A)\}.
\end{equation}

\vspace{.5mm}

\begin{ex}
We have $(\ZZ/i\ZZ, (1))\cong_{i-1}(\ZZ, (1))$ and hence $\lim\limits_{i\to \infty}(\ZZ/i\ZZ, (1))=(\ZZ, (1))$ in $\G_1$.
\end{ex}

\vspace{.5mm}

\begin{prop}[Grigorchuk {\cite{Gri}}]
For every $n\in \NN$, $\G_n$ is a compact, separable, Hausdorff space. In particular, $\G_n$ is Polish.
\end{prop}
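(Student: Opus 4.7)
The plan is to identify $\G_n$ with the space $\caln(F_n)$ of normal subgroups of $F_n$, viewed as a subset of $2^{F_n}$ with the product topology, and then read off all three properties from standard facts about the Cantor space. First I would introduce the map $\Phi\colon\G_n\to\caln(F_n)$ defined by $\Phi((G,A))=\Ker\pi_A$, where $\pi_A\colon F_n\to G$ is the evaluation epimorphism determined by $x_i\mapsto a_i$. By definition the equivalence relation on marked groups identifies precisely those pairs with the same kernel, and each normal subgroup $N$ of $F_n$ is realized by $(F_n/N,(\overline{x_1},\ldots,\overline{x_n}))$, so $\Phi$ is a bijection.

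The next step is to verify that $\Phi$ is a homeomorphism. Writing $B_r\subseteq F_n$ for the set of elements of $X$-length at most $r$, the relation $(G,A)\cong_r(H,B)$ unpacks, via the evaluation homomorphisms, into the condition $\Ker\pi_A\cap B_r=\Ker\pi_B\cap B_r$. Hence $\Phi$ sends each basic neighborhood $W(r,(G,A))$ onto the standard subspace neighborhood of $\Phi((G,A))$ in $\caln(F_n)\subseteq 2^{F_n}$ determined by the finite set $B_r$. Because every finite subset of $F_n$ is contained in some $B_r$, these neighborhoods form a base of the subspace topology, and $\Phi$ is a homeomorphism.

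It then remains to show that $\caln(F_n)$ is closed in $2^{F_n}$ and to invoke general properties of the Cantor space. A set $S\subseteq F_n$ lies in $\caln(F_n)$ iff it satisfies the conditions $1\in S$; $xy\in S$ whenever $x,y\in S$; $x^{-1}\in S$ whenever $x\in S$; and $gxg^{-1}\in S$ whenever $x\in S$ and $g\in F_n$. Each individual instance of these conditions defines a closed cylinder in $2^{F_n}$, so $\caln(F_n)$ is an intersection of countably many closed sets and hence closed. Since $F_n$ is countable, $2^{F_n}$ is compact, Hausdorff, metrizable, and separable, and all of these properties are inherited by the closed subspace $\caln(F_n)\cong\G_n$; a compact metrizable space is automatically Polish, giving the final claim. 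I do not anticipate any genuine obstacle here; the only point requiring a little care is the identification of the topology generated by the sets $W(r,(G,A))$ with the subspace topology from $2^{F_n}$, which ultimately reduces to the cofinality of the balls $B_r$ among finite subsets of $F_n$.
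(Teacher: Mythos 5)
The paper states this proposition as a citation to Grigorchuk and supplies no proof of its own, so there is no in-paper argument to compare against. That said, your proof is correct and is in fact the standard argument: identify $\G_n$ with the space $\caln(F_n)$ of normal subgroups of $F_n$ via marking kernels, check that the base $\{W(r,\cdot)\}$ corresponds exactly to the subspace base on $\caln(F_n)\subseteq 2^{F_n}$ generated by the balls $B_r$ (which suffices because the $B_r$ are cofinal among finite subsets of $F_n$), observe that being a normal subgroup is a conjunction of countably many clopen conditions so that $\caln(F_n)$ is closed in the Cantor space $2^{F_n}$, and inherit compactness, separability, and metrizability from $2^{F_n}$, whence Polishness. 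One small stylistic point: each ``instance'' of a closure condition (e.g.\ $x,y\in S\Rightarrow xy\in S$ for fixed $x,y$) is a clopen set, a finite Boolean combination of cylinders rather than a single cylinder, but this does not affect the conclusion that the countable intersection is closed. All the steps you flag as needing care are indeed the ones that need care, and you handle them correctly.
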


It is easy to see that the map $(G, (a_1, \ldots, a_n))\mapsto (G, (a_1, \ldots, a_n, 1))$ defines a continuous embedding $\G_n \to \G_{n+1}$. The topological union $$\G=\bigcup_{n=1}^\infty \G_n$$ is called the \emph{space of finitely generated marked groups}. The topology on $\G$ can be equivalently described as follows: a sequence $\{(G_i, A_i)\}_{i\in \NN}$ converges to $(G, A)$ in $\G$ if and only if there exist natural numbers  $n$ and $M\in \NN$ such that $(G,A)\in \G_n$, $(G_i, A_i)\in \G_n$ for all $i\ge M$, and the subsequence $\{(G_i, A_i)\}_{i=M}^\infty$ converges to $(G,A)$ in $\G_n$.

\section{Proof of Theorem \ref{main}}

For a subgroup $H$ of a group $G$, we denote by $E(G,H)$ the HNN-extension corresponding to the identical homomorphism $H\to H$. Assuming that $G$ has a presentation $\langle A\mid \mathcal R\rangle $ and thinking of each $h\in H$ as a word in the alphabet $A^{\pm 1}$, we have
\begin{equation}\label{EGH}
E(G,H)=\langle A, t\mid \mathcal R, \;\, t^{-1}ht=h\;\, \forall\, h\in H\rangle.
\end{equation}

\begin{rem}\label{Rem:FP}
If $G$ is finitely presented and $H$ is finitely generated, then $E(G,H)$ admits a finite presentation. Indeed, it suffices to impose the relations $t^{-1}ht=h$ for generators of $H$.
\end{rem}

To make our paper self-contained, we recall Britton's lemma on HNN-extensions. We state it in the particular case when the stable letter commutes with the associated subgroup.

\begin{lem}[Britton, \cite{Bri}]\label{BL}
Let $G$, $H$, and $E(G,H)$ be as above. Suppose that $$g_0t^{\e_1}g_1\ldots t^{\e_k}g_k=1$$ in $E(G,H)$, where
where $g_0, \ldots, g_k\in G$ and $\e_1, \ldots, \e_k\in \{\pm 1\}$. Then either $k=0$ and $g_0=1$ or there exists $j\in \{ 1, \ldots, k-1\}$ such that $\e_j=-\e_{j+1}$ and $g_j\in H$.
\end{lem}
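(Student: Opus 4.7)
My plan is to follow the classical normal form proof of Britton's Lemma, exploiting the simplification that the associated subgroups of $E(G,H)$ coincide and that the associated isomorphism is the identity. Fix once and for all a set $T \subseteq G$ of right coset representatives of $H$ in $G$, with $1 \in T$, so that every $g\in G$ factors uniquely as $g = h\tau$ with $h\in H$ and $\tau \in T$. Call a sequence $g_0 t^{\e_1} g_1 \cdots t^{\e_k} g_k$ \emph{reduced} if no $j\in \{1,\ldots, k-1\}$ satisfies $\e_j = -\e_{j+1}$ together with $g_j \in H$. The lemma becomes the assertion that a reduced sequence with $k\ge 1$ does not represent $1$ in $E(G,H)$.

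To prove this, I would construct a faithful right action of $E(G,H)$ on the set $\Omega$ of \emph{strict} normal forms, namely those reduced sequences with $g_0\in G$ arbitrary but $g_1,\ldots, g_k \in T$. Each $g \in G$ acts by left multiplication on the first factor, $(g_0 t^{\e_1} g_1 \cdots)\cdot g^{-1} = ((gg_0) t^{\e_1} g_1 \cdots)$, while $t^{\e}$ acts by a slide-and-cancel rule: given an input with rightmost factor $g_k$, decompose $g_k = h\tau$ with $h\in H$, $\tau \in T$, and either cancel — in the case $\tau = 1$ and $\e_k = -\e$, replace $t^{\e_k} g_k t^{\e}$ by $h$ and absorb it into $g_{k-1}$ — or else append $t^{\e}$ after using $t^{\pm 1} h = h t^{\pm 1}$ to push $h$ across $t^{\e}$, leaving a trailing $\tau$ followed by $t^{\e}\cdot 1$. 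A symmetric rule handles the initial application when $k=0$.

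The main technical step is to verify that these rules respect the defining relations \eqref{EGH}, so that they assemble into a homomorphism $E(G,H)\to \mathrm{Sym}(\Omega)$: the relations from $\mathcal R$ hold because $G$ acts by multiplication in $G$ on the first coordinate, and the relations $t^{-1}ht = h$ ($h\in H$) are exactly what the slide-and-cancel rule encodes. Granting the action, one evaluates it on a reduced word $w = g_0 t^{\e_1} g_1 \cdots t^{\e_k} g_k$ applied to the basepoint $(1;\,)$. Because $w$ is reduced, none of the successive applications of $t^{\e_i}$ triggers a cancellation, so the number of $t$-letters strictly increases at each step, yielding a normal form with exactly $k$ occurrences of $t^{\pm 1}$. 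For $k\ge 1$ this form is distinct from the basepoint, so $w\ne 1$ in $E(G,H)$, as required.

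The main obstacle will be the routine but fiddly case analysis required to check that $t$ and $t^{-1}$ act as mutually inverse bijections of $\Omega$ and that the slide rule is genuinely well-defined regardless of whether the rightmost $g_k$ lies in $H$ or not; this is where the hypothesis $1 \in T$ and the uniqueness of the decomposition $g = h\tau$ are used in an essential way.
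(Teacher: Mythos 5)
The paper does not prove this lemma---it is quoted (in the special case $A=B=H$, $\phi=\mathrm{id}$) from Britton's paper \cite{Bri} and used as a black box---so there is no argument in the source to compare against. Judged on its own, your strategy is the classical van der Waerden trick (build an action of $E(G,H)$ on a set of normal forms), which is the standard route to Britton's Lemma; but the action you set up is internally inconsistent and would not establish the result.

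You have $g\in G$ act on the \emph{leftmost} factor, $\omega\cdot g=(g^{-1}g_0)t^{\e_1}\cdots$, while $t^{\e}$ acts by appending at the \emph{rightmost} end. These rules are not those of a single action of $E(G,H)$, and the key step fails: for a reduced word $w=g_0\,t\,g_1\,t^{-1}\,g_2$ with $g_1\notin H$, applying $g_0,t,g_1$ to the basepoint under your rules yields $(g_1^{-1}g_0^{-1})\,t\,1$, whose rightmost entry is $1\in H$ (your $g$-letters never reach the right end, so the datum ``$g_1\notin H$'' never enters the record). Applying $t^{-1}$ now \emph{does} trigger the cancellation rule and collapses everything to $(g_0g_1g_2)^{-1}$, contradicting your claim that reducedness of $w$ prevents cancellations; the ``action'' conflates $w$ with its $G$-skeleton $g_0g_1g_2$. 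The repair is to have all generators act from the \emph{same} side, matched to the normal form: with $g_0$ arbitrary and $g_1,\dots,g_k\in T$, use a \emph{left} action as in Lyndon--Schupp, $g\cdot\omega=(gg_0)t^{\e_1}\cdots$, and $t^{\e}\cdot\omega$ computed by writing $g_0=h\tau$, sliding $h$ through $t^{\e}$, and cancelling a possible $t^{\e}1t^{-\e}$. Reading a reduced $w$ from right to left then places each $g_i$ at the active end exactly when the neighboring $t$-letters are processed, reducedness forbids cancellation, and the $t$-count climbs to $k\ge1$; the rest of your outline then goes through. (Also note that ``absorb $h$ into $g_{k-1}$'' may leave $T$, so the slide rule must propagate the $H$-factor all the way across the word; routine, but it should be stated rather than elided.)
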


\begin{lem}\label{Iso}
If $H_1$, $H_2$ are conjugate subgroups of a group $G$, then $E(G,H_1)\cong E(G, H_2)$.
\end{lem}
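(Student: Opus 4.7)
The plan is to exhibit explicit mutually inverse homomorphisms between $E(G,H_1)$ and $E(G,H_2)$ that fix $G$ pointwise and twist the stable letter by the conjugating element.

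Fix $g\in G$ with $H_2=gH_1g^{-1}$, and let $t$, $t'$ denote the stable letters of $E(G,H_1)$ and $E(G,H_2)$ respectively. First I would define $\phi\colon E(G,H_1)\to E(G,H_2)$ by $\phi|_G=\id_G$ and $\phi(t)=g^{-1}t'g$. Using presentation (\ref{EGH}), showing that $\phi$ is a well-defined homomorphism reduces to verifying that the HNN relations are preserved, i.e., that $(g^{-1}t'g)^{-1}h(g^{-1}t'g)=h$ in $E(G,H_2)$ for every $h\in H_1$. This is immediate: since $ghg^{-1}\in H_2$, it commutes with $t'$ by definition of $E(G,H_2)$, and conjugating the resulting identity by $g$ on both sides gives exactly what is needed. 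Symmetrically, $\psi\colon E(G,H_2)\to E(G,H_1)$ defined by $\psi|_G=\id_G$ and $\psi(t')=gtg^{-1}$ is a well-defined homomorphism, with the verification relying on $g^{-1}H_2g=H_1$ commuting with $t$ in $E(G,H_1)$.

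Finally, the compositions $\psi\circ\phi$ and $\phi\circ\psi$ fix $G$ pointwise and send $t$, $t'$ to themselves, hence are the identity on generators of each HNN extension. This gives $E(G,H_1)\cong E(G,H_2)$. I do not anticipate a substantive obstacle here: the entire argument is a direct application of the universal property of the HNN extension as presented in (\ref{EGH}), and Britton's lemma plays no role (it would only be needed if one were analyzing words that equal the identity in $E(G,H_i)$, rather than defining homomorphisms out of a presentation).
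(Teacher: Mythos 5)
Your proof is correct and follows essentially the same route as the paper: fix $g$ with $H_2=gH_1g^{-1}$, map $t\mapsto g^{-1}t g$ (and its inverse $t\mapsto gtg^{-1}$) while fixing $G$, check the HNN relations via the universal property, and observe the compositions are the identity on generators. You merely spell out the relation-verification step in slightly more detail than the paper does.
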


\begin{proof}
Let $G=\langle A\mid \mathcal R \rangle $. By definition, we have
\begin{equation}\label{EGHi}
E(G,H_i)=\langle A, t\mid \mathcal R, \;\, t^{-1}ht=h\;\, \forall\, h\in H_i\rangle
\end{equation}
for $i=1,2$. Suppose that $H_2=gH_1g^{-1}$ for some $g\in G$. Using (\ref{EGHi}), it is straightforward to verify that the map $a\mapsto a$ for all $a\in A$ and $t\mapsto g^{-1}tg$ extends to a homomorphism $\alpha\colon E(G, H_1)\to E(G, H_2)$. Similarly, the map $a\mapsto a$ for all $a\in A$ and $t\mapsto gtg^{-1}$ extends to a homomorphism $\beta\colon E(G, H_2)\to E(G, H_1)$. Since $\alpha\circ \beta$ and $\beta\circ \alpha$ are the identical maps on $E(G, H_2)$ and $E(G, H_1)$, respectively, we have $E(G,H_1)\cong E(G, H_2)$.
\end{proof}

We are now ready to prove the main result of our paper.

\begin{proof}[Proof of Theorem \ref{main}]
Let $G$ be a group generated by a set $A=\{a_1, \ldots, a_n\}$. We will show that the map $f\colon \Sub(G)\to \mathcal G_{n+1}$ defined by the formula $$f(H)= (E(G,H), (a_1, \ldots, a_n, t))\;\;\;\; \forall\, H\le G,$$ where $E(G,H)$ is given by (\ref{EGH}), satisfies all the requirements. Throughout the proof, $F_{n+1}$ denotes the free group with a basis $X=\{ x_1, \ldots, x_{n+1}\}$.

Let us first show that $f$ is injective. Suppose that $H_1\ne H_2$ are two subgroups of $G$. Without loss of generality, we can assume that there exists $h\in H_1\setminus H_2$. Then $[h, t_1]=1$ in $E(G, H_1)$ while $[h, t_2]\ne 1$ in $E(G, H_2)$ by Britton's Lemma. Therefore, the pairs $(E(G,H_1), (a_1, \ldots, a_n, t_1))$ and $(E(G,H_2), (a_1, \ldots, a_n, t_2))$ are not equivalent, i.e., they represent distinct elements of $\G$.

Further, we show that $f$ is continuous at every $H\in \Sub(G)$. To this end, for every natural number $r$, we need to find a finite subset $\mathcal F\subseteq G$ such that
\begin{equation}\label{cont}
f(U(\mathcal F, H))\subseteq W(r, f(H)),
\end{equation}
where the neighborhoods $U(\mathcal F, H)$ and $W(r, f(H))$ are defined by (\ref{U}) and (\ref{W}), respectively.

For an element $g\in G$, we denote by $|g|_A$ its length with respect to the generating set $A$. Let
$$
\mathcal F=\{ g\in G \mid |g|_A\le r\}.
$$
Proving the inclusion (\ref{cont}) amounts to showing that $f(K) \cong _r f(H)$ for any $K \in U(\mathcal F, H)$. Arguing by contradiction, suppose that there exists a word
$$
w=f_0x_{n+1}^{\e_1}f_1\ldots x_{n+1}^{\e_k}f_k,
$$
where each $f_i$ is a word in the alphabet $\{ x_1^{\pm 1}, \ldots, x_n^{\pm 1}\}$ and $\e_i=\pm 1$, such that $|w|_X\le r$ and

\medskip

($\ast$) \;\;\;\; $w(a_1, \ldots, a_n, t) =1$ in $E(G,H)$ but $w(a_1, \ldots, a_n, t) \ne 1$ in $E(G,K)$, or vice versa.

\medskip

\noindent Without loss of generality, we can assume that $w$ is the shortest element of $F_{n+1}$ satisfying these conditions.

In both groups $E(G,H)$ and $E(G, K)$, we have
$$
w(a_1, \ldots, a_n, t)=g_0t^{\e_1}g_1\ldots t^{\e_k}g_k,
$$
where $g_0=f_0(a_1,\ldots, a_n), \ldots, g_k=f_k(a_1,\ldots, a_n)$ are elements of $G$. Note that we necessarily have $k\ge 1$ since the natural maps from $G$ to $E(G,H)$ and $E(G, K)$ are injective. Applying Britton's lemma, we conclude that there must exist $j\in \{ 1, \ldots, k-1\}$ such that $\e_{j}=-\e_{j+1}$ and $g_{j}\in H$ or $g_j\in K$. Obviously, $$|g_j|_A\le |f_j|_X \le |w|_X \le r.$$ Hence $g_j\in \mathcal F$. Since $K \in U(\mathcal F, H)$,  $g_j$ must belong to both $H$ and $K$. Therefore, we have $t^{\e_j}g_jt^{\e_{j+1}}=g_j$ in both $E(G,H)$ and $E(G, K)$. This means that the word $u$ obtained from $w$ by replacing the subword $x_{n+1}^{\e_j}f_jx_{n+1}^{\e_{j+1}}$ with $f_j$, satisfies
$$
u(a_1,\ldots , a_n, t)= w(a_1, \ldots, a_n, t)
$$
in both $E(G,H)$ and $E(G, K)$. Clearly, $|u|_X\le |w|_X-2< |w|_X$, which contradicts the choice of $w$ as the shortest word of $F_{n+1}$ satisfying ($\ast$). This contradiction shows that $f(K) \in W(r, f(H))$ for all $K \in U(\mathcal F, H)$ and completes to proof of continuity. Lemma \ref{Iso} and Remark \ref{Rem:FP} imply properties (a) and (b).
\end{proof}

\begin{rem}
By the Higman-Neumann-Neumann theorem, every countable group $C$ embeds in a finitely generated group $G$. Furthermore, if $C$ is recursively presented, it embeds in a finitely presented group $G$ by the Higman embedding theorem. (Recall that a group is \emph{recursively presented} if it admits a presentation with a finite set of generators and recursively enumerable set of relations.) These embeddings induce  continuous maps $\Sub(C)\to \Sub(G)$ and allow us to generalize Theorem \ref{main} to all countable groups; in these settings, condition (b) will read as follows: \emph{if $G$ is recursively presented and $H$ is finitely generated, then $f(H)$ is finitely presented.} Of course, this generalization comes at the cost of making the map $f$ less explicit.
\end{rem}

We record an immediate corollary.

\begin{cor}\label{Cor:main}
Let $G$ be a finitely generated group. If $H\le G$ and $Orb_G(H)$ has no isolated points, then $E(G,H)$ is condensed.
\end{cor}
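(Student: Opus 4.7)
The plan is to deduce the corollary by combining Theorem~\ref{main} with the standard fact that being isolated in $\G$ depends only on the underlying group and not on the choice of marking.

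First, since $H = eHe^{-1}$ lies in $\Orb_G(H)$ and $\Orb_G(H)$ has no isolated points by hypothesis, there exists a sequence of distinct conjugates $H_i = g_i H g_i^{-1}$ with $H_i \to H$ in $\Sub(G)$. Continuity of $f$ yields $f(H_i) \to f(H)$ in $\G$; injectivity of $f$ gives $f(H_i) \ne f(H)$; and part (a) of Theorem~\ref{main} places each $f(H_i)$ inside $[E(G,H)]$. Hence the distinguished marking $f(H) = (E(G,H), (a_1, \ldots, a_n, t))$ is a non-isolated point of $[E(G,H)]$ in $\G$.

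Second, to upgrade this to the statement that every element of $[E(G,H)]$ is non-isolated (which is what the definition of condensed requires), I would invoke the fact that the set of isolated points of $\G$ is invariant under change of marking: if one marked version of a finitely generated group is isolated, then every marked version is. Concretely, given any other marking $(E(G,H), B)$ with $B = (b_1, \ldots, b_m)$, one writes each $b_j$ as a word in $(A \cup \{t\})^{\pm 1}$ and, conversely, each element of $A \cup \{t\}$ as a word in $B^{\pm 1}$. Substituting these words converts a sequence in $\G_m$ converging to $(E(G,H), B)$ into a sequence in $\G_{n+1}$ converging to $f(H)$, and vice versa. Applied to the sequence $\{f(H_i)\}$ produced above, this yields a sequence of distinct markings in $[E(G,H)]$ converging to $(E(G,H), B)$, which is exactly what is needed.

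The only nontrivial step is the marking-invariance reduction in the second paragraph, which although standard is not explicitly recalled in the excerpt and requires a short verification (essentially checking that if the first $r$ relations among the $b_j$ are known in some $(H', B')$, then the first $r'$ relations among the pulled-back generators can be computed in $H'$ for an appropriate $r'$ depending on the word lengths of the $u_j$ and $v_j$). Everything else is a near-immediate consequence of Theorem~\ref{main}.
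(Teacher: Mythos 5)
Your argument is correct and follows essentially the same two-step structure as the paper's proof: first use continuity and injectivity of $f$ together with Theorem~\ref{main}(a) to produce a non-isolated point of $[E(G,H)]$, then upgrade one non-isolated point to ``no isolated points.'' The only substantive difference is in the second step: the paper handles it by citing \cite[Corollary 6.1]{Osi21a} (the isomorphism class of any finitely generated group in $\G$ is either discrete or has no isolated points), whereas you sketch a direct proof of the underlying marking-invariance fact via the standard sequence-conversion argument. Your sketch is sound; one small caveat is that the general claim as you phrase it (``the set of isolated points of $\G$ is invariant under change of marking'') concerns isolatedness in $\G$, whereas what is needed is isolatedness relative to the subspace $[E(G,H)]$ --- but your concrete argument already addresses this, since the converted sequence $\{(E(G,K_i), B_i)\}$ visibly stays inside $[E(G,H)]$ and, after checking that the new tuples generate and that the new marked groups remain distinct from $(E(G,H),B)$, gives exactly the required nontrivial convergent sequence.
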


\begin{proof}
Let $f\colon \Sub(G)\to \G$ be the map provided by Theorem \ref{main}. We have $f(K)\in [E(G,H)]$ for all $K\in \Orb_G(H)$ by Theorem \ref{main} (a). Since $f$ is continuous and injective, the image of $\Orb_G(H)$ under $f$ has no isolated points. Hence, $[E(G,H)]$ is non-discrete. By \cite[Corollary 6.1]{Osi21a}, the isomorphism class of every finitely generated group in $\G$ is either discrete or has no isolated points. Therefore, $E(G,H)$ is condensed.
\end{proof}

\section{Constructing finitely presented condensed groups}\label{Sec2}

Corollary \ref{Cor:main} reduces the problem of constructing a finitely presented condensed group to finding an example of a finitely presented group $G$ and a finitely generated subgroup $H\le G$ such that $Orb_G(H)$ has no isolated points.  In this section, we construct examples of such pairs.

Given a group presentation $P=\langle X\mid \mathcal R\rangle $, by $Q=\langle P, Y\mid \mathcal S\rangle$ we denote the presentation obtained by adding a set $Y$ of new generators and a set $\mathcal S$ of new relations to $P$; thus, $Q=\langle X,Y\mid \mathcal R, \mathcal S\rangle$. We also employ the notation $x^y=y^{-1}xy$ for elements $x$, $y$ of a group. For example, in the settings of Theorem \ref{main}, we have $E(G,H)=\langle G, t \mid h^t=h \;\, \forall\, h\in H\rangle$.

We begin by considering a particular example. Let
$$
B=\langle a, b, c\mid a^2=1, [a,a^b]=1, [b,c]=1, a^c=aa^b \rangle .
$$
It is well-known and easy to check that the subgroup $\langle a, b\rangle $ of $B$ is isomorphic to $\ZZ/2\ZZ \wr \ZZ$ via the map sending $a$ (respectively, $b$) to a generator of $\ZZ/2\ZZ$ (respectively, $\ZZ$); for details and a more general embedding theorem for metabelian groups, see \cite{Bau73}. In particular, the elements $a^{b^i}$, $i\in \ZZ$, form a basis of a free abelian subgroup of exponent $2$ in $B$.

The elements $h^2$ and $ha$ of the direct product $\langle h\rangle\times B$ generate infinite cyclic subgroups. We denote by
$$
G= \langle B, h,s  \mid [h,a]=[h,b]=[h,c]=1,  (h^2)^s= ha\rangle
$$
the corresponding HNN-extension of $\langle h\rangle\times B$.  Let also $H=\langle h^2\rangle$. In this notation, we have the following.

\begin{lem}\label{OrbGH}
The subset $Orb_G(H)\subseteq Sub(G)$ has no isolated points.
\end{lem}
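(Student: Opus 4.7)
The plan is to exhibit an explicit sequence of conjugates $K_n = g_n H g_n^{-1}$, each distinct from $H$, that converges to $H$ in the Chabauty topology on $\Sub(G)$. Since $G$ acts on $\Sub(G)$ by homeomorphisms and acts transitively on $\Orb_G(H)$, non-isolation of $H$ in $\Orb_G(H)$ will automatically imply that no point of the orbit is isolated.

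Set $g_n := c^n s^{-1}$. Using the HNN relation $(h^2)^s = ha$ together with $[h,a] = [h,b] = [h,c] = 1$, we compute
\[
y_n := g_n h^2 g_n^{-1} = c^n (s^{-1} h^2 s) c^{-n} = c^n (ha) c^{-n} = h \cdot (c^n a c^{-n}).
\]
The key algebraic observation is the identity $y_n^2 = h^2$, which is immediate from $a^2=1$ and $[h,B]=1$. Thus $K_n = \langle y_n \rangle$ properly contains $H = \langle y_n^2 \rangle$ as an index-$2$ subgroup; in particular $K_n \neq H$, and the elements of $K_n \setminus H$ are exactly $y_n^{2k+1} = h^{2k+1}\,(c^n a c^{-n})$ for $k \in \ZZ$.

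For Chabauty convergence $K_n \to H$, fix any finite $\mathcal F \subseteq G$. Since $H \subseteq K_n$, it suffices to show that for $n$ sufficiently large, no element of $\mathcal F$ has the form $h^{2k+1}(c^n a c^{-n})$ for any $k$. By uniqueness of the decomposition in the direct product $\langle h \rangle \times B$, each $g \in \mathcal F$ admits at most one such representation: the exponent $2k+1$ is pinned down by the $\langle h \rangle$-coordinate of $g$, and then $n$ is pinned down by the $B$-coordinate. Hence only finitely many values of $n$ can contribute, giving $K_n \cap \mathcal F = H \cap \mathcal F$ for all large $n$.

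The main obstacle, and essential algebraic input, is the pairwise distinctness of $\{c^n a c^{-n}\}_{n \in \ZZ}$ in $B$, which is implicit in the uniqueness step above. This is where the metabelian structure emphasized just before the lemma must be invoked: the normal closure $\langle a \rangle^B$ is an $\mathbb F_2$-vector space on which the images of $b$ and $c$ commute, and the relation $a^c = aa^b$ forces $c$ to act by multiplication by $1+b$. Consequently $c^n a c^{-n}$ corresponds to $(1+b)^{-n} \cdot a$, and the family $\{(1+b)^{-n}\}_{n \in \ZZ}$ is $\mathbb F_2$-linearly independent in the localization $\mathbb F_2[b^{\pm 1}][(1+b)^{-1}]$, which more than suffices.
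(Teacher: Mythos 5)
Your argument is correct in outline and follows the same overall strategy as the paper (exhibit a sequence of conjugates $\langle h\gamma_n\rangle$ with $\gamma_n\in B$, $\gamma_n^2=1$, that converges to $H$ in $\Sub(G)$), but you choose a genuinely different sequence of conjugators. The paper takes $g=(sb^i)^{-1}$, so the conjugates are $\langle h\,a^{b^i}\rangle$, and the needed escape-from-$\mathcal F$ property comes directly from the fact, already recorded in the text, that $\{a^{b^i}\}_{i\in\ZZ}$ is a basis of an infinite-rank $\mathbb F_2$-subspace (equivalently, that $A=\langle h, a^{b^i}: i\in\ZZ\rangle$ is not finitely generated). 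You take $g_n=c^ns^{-1}$, so the conjugates are $\langle h\,(c^nac^{-n})\rangle$, and you need pairwise distinctness of $c^nac^{-n}$. What this buys you is a cleaner and more explicit escape argument: once distinctness is known, at most $|\mathcal F|$ values of $n$ can be bad, with no need to first isolate the subgroup $A$ and argue non-finite-generation. What it costs is that distinctness of $c^nac^{-n}$ is less immediate than distinctness of $a^{b^i}$, and this is where your write-up has a soft spot.

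Specifically, your module computation identifies $c^nac^{-n}$ with $(1+b)^{-n}\cdot a$ and then asserts linear independence of $(1+b)^{-n}$ \emph{in the abstract localization} $\mathbb F_2[b^{\pm1}][(1+b)^{-1}]$. That alone does not finish the argument: you also need to know that the normal closure $\langle a\rangle^B$ is isomorphic to this localization and not to some proper quotient of it (in a proper quotient the $(1+b)^{-n}$ could collapse). The natural surjection $\mathbb F_2[b^{\pm1},c^{\pm1}]/(c-1-b)\twoheadrightarrow\langle a\rangle^B$ is indeed an isomorphism, but this requires a justification you do not give; it follows from the wreath-product fact ($\langle a,b\rangle\cong\ZZ/2\wr\ZZ$ forces the kernel to meet $\mathbb F_2[b^{\pm1}]$ trivially, and a nonzero ideal of the localization must meet $\mathbb F_2[b^{\pm1}]$ nontrivially). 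Since this ultimately invokes the very same input the paper uses, the paper's choice of $b^i$ over $c^n$ is arguably the more economical one, but your route is legitimate once the injectivity of the localization map onto $\langle a\rangle^B$ is stated and proved. The rest of your proof — the computation $y_n^2=h^2$, the identification $K_n\setminus H=\{h^{2k+1}(c^nac^{-n})\}$, and the use of uniqueness of decomposition in the base group $\langle h\rangle\times B$ of the HNN extension $G$ — is correct.
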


\begin{proof}
Since the action of $G$ on $\Sub(G)$ is continuous, it suffices to show that $H$ is a limit point of $Orb_G(H)$. Let $\mathcal F$ be a finite subset of $G$. We want to show that there exists $g\in G$ such that $gHg^{-1}\ne H$ and
\begin{equation}\label{HFH}
H\cap \mathcal F=gHg^{-1}\cap \mathcal F.
\end{equation}

By construction, the subgroup $A$ of $G$ generated by $\{ h\}\cup \{ a^{b^i}\mid i \in \ZZ\}$ is isomorphic to the direct product of $\ZZ=\langle h\rangle$ and $\bigoplus_{i\in \ZZ} A_i$, where $A_i=\langle a^{b^i}\rangle \cong \ZZ/2\ZZ$. In particular, $A$ is not finitely generated. Therefore, there exists $i\in \ZZ$ such that
\begin{equation}\label{abi}
a^{b^i}\notin \langle h, \mathcal F\cap A\rangle .
\end{equation}
We fix such an integer $i$ and let $g=(sb^i)^{-1}$. Using relations of $G$, we obtain
\begin{equation}\label{gHg}
gHg^{-1}=\langle (h^2)^{sb^i}\rangle =\langle (ha)^{b^i}\rangle = \langle ha^{b^i}\rangle.
\end{equation}
Since $(ha^{b^i})^2 = h^2$, we have $\langle ha^{b^i}\rangle = H\cup Ha^{b^i}$. By the choice of $i$, we have $Ha^{b^i}\cap \mathcal F=\emptyset$. Indeed, if there exists $f\in Ha^{b^i}\cap \mathcal F$, then $f\in A\cap \mathcal F$ and $a^{b^i}\in Hf$, which contradicts (\ref{abi}).
Therefore, $\langle ha^{b^i}\rangle \cap \mathcal F = H\cap \mathcal F$. Combining this with (\ref{gHg}) yields (\ref{HFH}).
\end{proof}

\begin{proof}[Proof of Corollary \ref{cor}]
The result follows immediately from Lemma \ref{OrbGH} and Corollary \ref{Cor:main}.
\end{proof}

\begin{rem}
As we mentioned in the introduction, every finitely presented condensed group is extremely non-Hopfian. This property of the group $E$ from Corollary \ref{cor} can be verified directly. Indeed, using the arguments similar to those in the proof of Theorem \ref{main} one can show that:
\begin{enumerate}
\item[(a)] for every $i\in \ZZ$, the map $a\mapsto a$, $b\mapsto b$, $c\mapsto c$, $h\mapsto h$, $s\mapsto s$, $t\mapsto t^{sb^i}$ extends to a surjective, non-injective homomorphism $\e_i\colon E\to E$;
\item[(b)] for any finite $\mathcal F\subseteq E$, all but finitely maps $\e_i$ are injective on $\mathcal F$.
\end{enumerate}
To show that the map defined in (a) extends to a homomorphism, one has to use the relation $sh^2s^{-1}=h^4$, which follows from $(h^2)^s=ha$, $[h,a]=1$, and $a^2=1$. The proof of (b) and non-injectivity of $\e_i$ employs Britton's lemma. We leave verifying details to the interested reader.
\end{rem}

Our approach to constructing condensed groups makes it natural to ask when a finitely generated subgroup $H$ can be embedded in a finitely presented group $G$ such that $Orb_H(G)$ has no isolated points. The proposition below answers this question and can be used to construct additional examples of finitely presented condensed groups. Since this proposition is unnecessary for obtaining the main results of our paper, we only give a sketch of the proof and leave details to the reader. Recall that a group $H$ is \emph{non-co-Hopfian} if there exists an injective, non-surjective homomorphism $H\to H$.

\begin{figure}
  \begin{center}
\begingroup%
  \makeatletter%
  \providecommand\color[2][]{%
    \errmessage{(Inkscape) Color is used for the text in Inkscape, but the package 'color.sty' is not loaded}%
    \renewcommand\color[2][]{}%
  }%
  \providecommand\transparent[1]{%
    \errmessage{(Inkscape) Transparency is used (non-zero) for the text in Inkscape, but the package 'transparent.sty' is not loaded}%
    \renewcommand\transparent[1]{}%
  }%
  \providecommand\rotatebox[2]{#2}%
  \newcommand*\fsize{\dimexpr\f@size pt\relax}%
  \newcommand*\lineheight[1]{\fontsize{\fsize}{#1\fsize}\selectfont}%
  \ifx\svgwidth\undefined%
    \setlength{\unitlength}{216.80544947bp}%
    \ifx\svgscale\undefined%
      \relax%
    \else%
      \setlength{\unitlength}{\unitlength * \real{\svgscale}}%
    \fi%
  \else%
    \setlength{\unitlength}{\svgwidth}%
  \fi%
  \global\let\svgwidth\undefined%
  \global\let\svgscale\undefined%
  \makeatother%
  \begin{picture}(1,0.5432)%
    \lineheight{1}%
    \setlength\tabcolsep{0pt}%
    \put(0,0){\includegraphics[width=\unitlength,page=1]{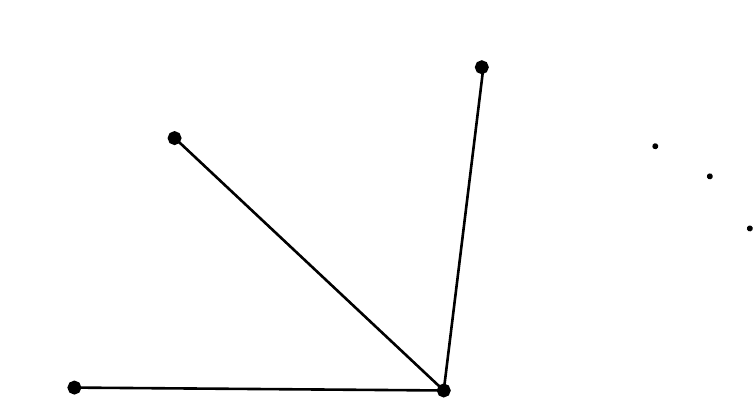}}%
    \put(0.6341536,0.01979933){\color[rgb]{0,0,0}\makebox(0,0)[lt]{\lineheight{1.25}\smash{\begin{tabular}[t]{l}$H$\end{tabular}}}}%
    \put(-0.00418902,0.00975637){\color[rgb]{0,0,0}\makebox(0,0)[lt]{\lineheight{1.25}\smash{\begin{tabular}[t]{l}$H_1$\end{tabular}}}}%
    \put(0.16096276,0.40207443){\color[rgb]{0,0,0}\makebox(0,0)[lt]{\lineheight{1.25}\smash{\begin{tabular}[t]{l}$H_2$\end{tabular}}}}%
    \put(0.62935656,0.50060709){\color[rgb]{0,0,0}\makebox(0,0)[lt]{\lineheight{1.25}\smash{\begin{tabular}[t]{l}$H_3$\end{tabular}}}}%
    \put(0.28641541,0.0549498){\color[rgb]{0,0,0}\makebox(0,0)[lt]{\lineheight{1.25}\smash{\begin{tabular}[t]{l}$H$\end{tabular}}}}%
    \put(0.40188175,0.22696408){\color[rgb]{0,0,0}\makebox(0,0)[lt]{\lineheight{1.25}\smash{\begin{tabular}[t]{l}$H$\end{tabular}}}}%
    \put(0.64168642,0.23697881){\color[rgb]{0,0,0}\makebox(0,0)[lt]{\lineheight{1.25}\smash{\begin{tabular}[t]{l}$H$\end{tabular}}}}%
  \end{picture}%
\endgroup%

  \caption{The star of groups in the proof of Proposition \ref{prop}.}\label{Fig1}
  \end{center}
\end{figure}

\begin{prop}\label{prop}
For any finitely generated group $H$, the following conditions are equivalent.
\begin{enumerate}
\item[(a)] There exists a finitely presented group $G$ containing $H$ such that $Orb_G(H)$ has no isolated points.
\item[(b)] $H$ is recursively presented and non-co-Hopfian.
\end{enumerate}
\end{prop}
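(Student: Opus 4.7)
My overall plan is to prove the two implications separately. The direction $(a)\Rightarrow(b)$ is essentially automatic from the definitions, while $(b)\Rightarrow(a)$ requires combining Higman's embedding theorem with a graph-of-groups construction of the kind pictured in Figure~\ref{Fig1}.

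For $(a)\Rightarrow(b)$, I would first observe that any finitely generated subgroup of a finitely presented group is recursively presented: pick a finite generating set of $H$, express each generator as a word in the generators of $G$, and recursively enumerate relations of $H$ by enumerating trivial words of $G$ (which is possible since $G$ is even finitely presented) and pulling back. For non-co-Hopficity, I would take $F$ to contain a finite generating set $Y$ of $H$ and use the hypothesis that $H$ is a limit point of $\Orb_G(H)$ to find $g\in G$ with $gHg^{-1}\neq H$ and $gHg^{-1}\cap F=H\cap F$. Each generator of $H$ then lies in $gHg^{-1}$, so conjugation by $g^{-1}$ sends $H$ into $H$ and restricts to an injective endomorphism of $H$; its image $g^{-1}Hg$ cannot equal $H$ (otherwise $gHg^{-1}=H$), so the endomorphism is not surjective.

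For $(b)\Rightarrow(a)$, I would first embed $H$ into a finitely presented group $\hat H$ via Higman's theorem and fix an injective, non-surjective $\phi\colon H\to H$. Then I would construct $G$ as the fundamental group of the star of groups in Figure~\ref{Fig1}: the central vertex group is $\hat H$, each peripheral vertex group is an isomorphic copy of $\hat H$, and all edge groups are copies of $H$, with edge monomorphisms combining the inclusion $H\hookrightarrow\hat H$ on the central side with twists by $\phi$ and an additional ``shifting'' element (playing the role of $b$ in Lemma~\ref{OrbGH}) on the peripheral side. Since all vertex groups are finitely presented and edge groups finitely generated, $G$ is finitely presented, and $H$ sits inside $G$ by construction. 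To verify that $\Orb_G(H)$ has no isolated points, I would produce, for each finite $F\subseteq G$, an element $g=g_n$ by combining a path from the central vertex across an edge with a high power of the shift, so that $g_n H g_n^{-1}$ strictly contains $H$ but its non-trivial cosets lie arbitrarily deep in the Bass-Serre tree of $G$; Britton's lemma and the normal-form theorem for amalgamated products would then show that for large $n$ these cosets miss $F$.

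The hard part is the accumulation. A tempting shortcut—taking $G$ to be just the HNN extension $\hat H\ast_\phi$—fails: the resulting ascending chain $H\subsetneq t^{-1}Ht\subsetneq t^{-2}Ht^2\subsetneq\cdots$ of conjugates of $H$ does \emph{not} approach $H$ in $\Sub(G)$, because an element of the form $t^{-1}ht$ with $h\in H\setminus\phi(H)$ lies in every $t^{-n}Ht^n$ for $n\ge 1$, so any finite $F$ containing such an element obstructs the required agreement on $F$. The role of the additional peripheral vertices together with the shift element in the star is precisely to disperse the excess cosets of successive conjugates into disjoint regions of $G$, so that any prescribed finite $F$ is eventually avoided. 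Verifying that the star does this cleanly, via Bass-Serre theory applied to the specific twists encoded by $\phi$, is the technical core of the argument.
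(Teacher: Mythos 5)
Your proof of the implication $(a)\Rightarrow(b)$ is essentially identical to the paper's: take $\mathcal F$ containing a finite generating set of $H$, find $g$ with $gHg^{-1}\ne H$ and $gHg^{-1}\cap\mathcal F=H\cap\mathcal F$, and observe that $H\lneqq gHg^{-1}\cong H$ gives a proper embedding of $H$ into itself.

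The implication $(b)\Rightarrow(a)$ has a genuine gap, and the order in which you invoke Higman's theorem is the source of it. You propose to apply Higman first, embedding $H$ into a finitely presented $\hat H$, and then to build $G$ as the fundamental group of the star of groups in Figure~\ref{Fig1} with vertex groups $\hat H$ and edge groups $H$. But the star in Figure~\ref{Fig1} is a \emph{countably infinite} star, and the fundamental group of an infinite graph of groups is not even finitely generated, let alone finitely presented — your assertion that ``all vertex groups are finitely presented and edge groups finitely generated, $G$ is finitely presented'' is only correct for finite graphs. Higman-embedding $H$ into $\hat H$ does nothing to repair this, because the infinitude of the graph is what breaks finite presentation, not the presentation of the individual vertex groups. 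The paper avoids this by building the (infinitely generated, but recursively presented) group $L$ first — the fundamental group of the star with vertex groups $H$ and $H_i$, further extended by stable letters identifying $H$ with each $H_i$ — and only then applying Higman's embedding theorem to $L$ at the very end. The continuous injection $\Sub(L)\to\Sub(G)$ induced by $L\hookrightarrow G$ then transports the accumulation of $\Orb_L(H)$ at $H$ into $\Sub(G)$.

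Your proposed remedy via a ``shifting element'' (playing the role of $b$ in Lemma~\ref{OrbGH}) to collapse the infinite star into a finite graph is an interesting direction, but it is not actually carried out, and you concede that verifying it ``is the technical core of the argument.'' It is far from clear that such a shift exists in general: in the explicit $H=\ZZ$ example of Section~\ref{Sec2} the shift $b$ is provided by the specific wreath-product structure of $B\cong\ZZ/2\ZZ\wr\ZZ$, and for arbitrary recursively presented non-co-Hopfian $H$ no analogous structure is available without additional work — work that the paper's route via $L$ and a terminal Higman embedding sidesteps entirely. Your separate observation that the naive ascending HNN extension $\hat H\ast_\phi$ does not suffice is correct and a nice sanity check, but it does not by itself supply the missing construction.
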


\begin{proof}
We first assume that $H$ satisfies (a). Every finitely generated subgroup of a finitely presented group is recursively presented, hence so is $H$. To show that $H$ is non-co-Hopfian, we let $\mathcal F$ be a finite generating set of $H$. By our assumption, there exists $g\in G$ such that $gHg^{-1}\ne H$ and $gHg^{-1}\in U(\mathcal F, H)$, where $U(\mathcal F, H)$ is defined by (\ref{U}). In particular, we have $\mathcal F\subseteq gHg^{-1}$. Therefore, $H\lneqq gHg^{-1}\cong H$. Since every finitely generated subgroup of a finitely presented group is recursively presented, we have (b).

Now, suppose that $H$ satisfies (b). Let $\{ H_i\mid i\in \NN\}$, be a set of isomorphic copies of $H$. For each $i\in \NN$, we fix an isomorphism $\alpha_i\colon H\to H_i$. We also fix an injective, non-surjective homomorphism $\beta\colon H\to H$.  Let $K$ denote the fundamental group of the countably infinite star graph of groups shown on Fig. \ref{Fig1} with the following local data. The groups associated to the central  (respectively, every peripheral) vertex is $H$ (respectively, $H_i\cong H$). The group associated to each edge is $H$ and the corresponding embeddings to the vertex groups are $\id \colon H\to H$ for the central vertex and $\alpha_i\circ \beta\colon H\to H_i$ for the corresponding peripheral vertex. Using the standard results about amalgamated free products, it is not difficult to show that $H\le K$ is the accumulation point of subgroups $H_i$ in $Sub(K)$.

Further, let $L$ denote the multiple HNN-extension of $K$ corresponding to the family of isomorphisms $\alpha_i\colon H\to H_i$. Obviously, $Orb_L(H)$ contains all subgroups $H_i$. Hence, $H$ is a limit point of $Orb_L(H)$ (equivalently,  $Orb_L(H)$ has no isolated points). It is easy to see that $L$ can be defined by a recursive presentation whenever $H$ is recursively presented. By the Higman embedding theorem, $L$ embeds into a finitely presented group $G$. The embedding $L\le G$ induces a continuous map $Sub(L)\to Sub(G)$ and, therefore, $Orb_G(H)$ has no isolated points.
\end{proof}

\noindent \textbf{Denis Osin: } Department of Mathematics, Vanderbilt University, Nashville 37240, U.S.A.\\
E-mail: \emph{denis.v.osin@vanderbilt.edu}

\end{document}